\def\S{{\mathcal S}}
\def\V{{\mathcal V}}
\newtheorem{thm}{Theorem}[section]
\newtheorem{prop}[thm]{Proposition}
\begin{document}

\title{Untouchable sets of size $2q \pm 1$ in $PG(2,q)$}
\author{Jeremy M. Dover}
\address{1204 W. Yacht Dr., Oak Island, NC 28465 USA}
\email{\tt doverjm@gmail.com}

\begin{abstract}
An untouchable set in a projective plane is a set of points such that no line of the plane meets the set in exactly one point. Recently, H\'eger and Nagy (Avoiding Secants of Given Size in Finite Projective Planes, {\em J. Combin. Des.} 33:83--93, 2024.) provided a generalization of untouchable sets to $k$-avoiding sets, and addressed the issue of the spectrum of sizes that such sets can attain in finite planes. Specific to the untouchable set case, the authors state as an open question the existence of untouchable sets of size $2q-1$ and $2q+1$. We answer this question in the affirmative for Desarguesian planes of even order, and provide a construction of untouchable sets of size $2q+1$ in $PG(2,q)$ for $ q \equiv 3\pmod{4}$.
\end{abstract}

\maketitle

\section{Introduction}
An {\em untouchable set}, also known as a {\em set without tangents}, in a projective plane is a set of points such that no line of the plane meets the set in exactly one point. Such sets seem to have first been addressed by Blokhuis, Seress and Wilbrink~\cite{blsewi}, wherein the authors prove that the smallest such set in the Desarguesian plane with $q$ odd is $q + \frac14 \sqrt{2q} + 2$. Moreover, the authors provide an example of size $2q-2$ for all such planes of order at least 7.

The minimum size in $PG(2,q)$ for $q$ even is clearly $q+2$; an untouchable set must have at least $q+2$ points (fix one point, and there is at least 1 more point on each of the $q+1$ lines through the fixed point), and a hyperoval is a set of $q+2$ points which lines meet in only $0$ or $2$ points. Numerous examples of untouchable sets in $PG(2,q)$ for $q$ even are known; the previously studied sets of {\em even type}, where every line meets the set in an even number of points, are examples of untouchable sets. Blokhuis, Sz\H{o}nyi and Weiner~\cite{blszwe} show that if an untouchable set in $PG(2,q)$ with $q$ even is not a set of even type, then the set must have at least $q + 1 + \sqrt{q/6}$ points. The authors also comment that the union of two non-disjoint hyperovals is an untouchable set which is not of even type.

The following, almost trivial, observation is related, but does not seem to have been explicitly stated previously.

\begin{prop}
Let $S$ and $T$ be untouchable sets in a projective plane $\pi$. Then $S \cup T$ is an untouchable set.
\end{prop}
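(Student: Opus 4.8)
The plan is to argue by contradiction, exploiting the elementary fact that intersecting a line with a subset can only shrink the intersection. Suppose, contrary to the claim, that $S \cup T$ fails to be untouchable. Then some line $\ell$ of $\pi$ meets $S \cup T$ in exactly one point; call that point $P$, so that $\ell \cap (S \cup T) = \{P\}$. The goal is to produce from this a tangent line to one of $S$ or $T$, contradicting the hypothesis.

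Since $P$ lies in $S \cup T$, it belongs to at least one of the two sets; without loss of generality I would assume $P \in S$. The key step is then to compare $\ell \cap S$ with $\ell \cap (S \cup T)$. Because $S \subseteq S \cup T$, we have $\ell \cap S \subseteq \ell \cap (S \cup T) = \{P\}$, so $\ell \cap S$ is either empty or equal to $\{P\}$. As $P \in S$, the empty possibility is ruled out, whence $\ell \cap S = \{P\}$.

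This says precisely that $\ell$ meets $S$ in a single point, i.e.\ that $\ell$ is a tangent line to $S$, contradicting the assumption that $S$ is untouchable. The case $P \in T$ is handled by the identical argument with the roles of $S$ and $T$ interchanged, and since $P$ must lie in at least one of them, the two cases are exhaustive. This completes the proof.

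As the author anticipates with the phrase ``almost trivial,'' there is no genuine obstacle in this argument: the entire content is the monotonicity of intersection under inclusion. The only point requiring a moment's care is noting that the lone point $P$ must actually belong to whichever of $S$ or $T$ one chooses to work with, so that the induced intersection $\ell \cap S$ (or $\ell \cap T$) is a genuine tangent of size one rather than the empty set.
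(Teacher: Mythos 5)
Your proof is correct and is essentially the paper's argument in contrapositive form: the paper argues directly that any line meeting $S \cup T$ meets $S$ or $T$, hence meets that set (and so the union) in at least two points, while you assume a tangent to the union and derive a tangent to $S$ or $T$. The underlying observation is identical, so there is nothing to add.
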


\begin{proof}
Let $\ell$ be a line of $\pi$, and suppose $\ell$ contains a point $P$ of $S \cup T$. Then $P \in S$ or $P \in T$, possibly both, in which case $\ell$ meets either $S$ or $T$ in at least one point. Since $S$ and $T$ are untouchable sets, this means $\ell$ meets $S$ or $T$ in at least 2 points, meaning $\ell$ cannot meet $S \cup T$ in a single point.
\end{proof}

H\'eger and Nagy~\cite{hn} generalize the concept of an untouchable set to a {\em $k$-avoiding set}, where no line can meet the set in exactly $k$ points. In this work, the authors provide a number of constructions showing that for most values of $k$, there is a $k$-avoiding set of any size; the exceptions to this are values of $k$ near the extremes, where untouchable sets lie. For untouchable sets, the authors show that in $PG(2,q)$ for $q \ge 4$, untouchable sets of all sizes in ${2q} \cup [2q+2,q^2+q+1]$ exist. Specifically, the authors leave as an open question the existence of untouchable sets of size $2q-1$ and $2q+1$ in $PG(2,q)$, noting that their computational results suggest that untouchable sets of size $2q-1$ exist for $q \ge 8$ even, but not for $q$ odd, while untouchable sets of size $2q+1$ exist for all sufficiently large $q$.

In this article, we provide constructions which show the existence of untouchable sets of size $2q-1$ and $2q+1$ in $PG(2,q)$ for all even $q \ge 8$. We also provide a construction of untouchable sets of size $2q+1$ in $PG(2,q)$ when $q \equiv 3 \pmod{4}$, $q \ge 7$.

\section{Constructions for $q$ even}
In this section we provide constructions for untouchable sets of size $2q-1$ and $2q+1$ in $PG(2,q)$ for $q \ge 8$ even. Our constructions are based on pencils of conics in this plane; the possible pencils are known and reported in Hirschfeld~\cite[Table 7.7]{hirschfeld}. Modelling $PG(2,q)$ as a three-dimensional vector space over $GF(q)$ with homogeneous coordinates $x$, $y$ and $z$, we are interested in the pencil of conics generated by the conics with equations $xy$ and $z^2+yz+xz$. For $k \in GF(q)$, we define $C_k$ to be the conic $\V(kxy+ z^2+yz+xz)$, the set of zeroes of the associated quadratic. From Hirschfeld, we know that three of these conics are degenerate line pairs, namely $\V(xy)$, $C_0 = \V(z^2+yz+xz) = \V(z(x+y+z))$, and $C_1 = \V(xy+z^2+yz+xz) = \V((x+z)(y+z))$, while the remainder are non-degenerate conics. Moreover, all conics in the pencil share four points, namely $(1,0,0),(0,1,0),(1,0,1),(0,1,1)$. For brevity, in what follows we will use the term conic to mean non-degenerate conic, and will specify other types as a line pair or repeated line as needed.

A conic in $PG(2,q)$ has $q+1$ points, no three collinear, and at each point there is a unique tangent line. When $q$ is even, these tangents are all concurrent at a point called the {\em nucleus}, which for $C_k$ is the point $(1,1,k)$, see Hirschfeld~\cite[Corollary 7.12]{hirschfeld}.

Adding the nucleus to the conic creates a {\em hyperconic}, a special case of a hyperoval. Thus we have a set of conics which we can use to create hyperovals by adding their nuclei, and whose union will form an untouchable set. Letting $a \in GF(q)$ with $a \neq 0,1$, the nucleus of $C_a$ is $(1,1,a)$. Every point not in the intersection of all elements of the pencil must lie on a unique element of the pencil, and the nucleus of $C_a$ cannot lie on any of the line pairs in the pencil, since that would force the nucleus to lie on a line which makes 2-point contact with $C_a$. Indeed, algebraically it is easy to see that $(1,1,a)$ lies on $C_{a^2}$. These facts let us prove:

\begin{thm}
In $PG(2,q)$ with $q \ge 8$, there exists an untouchable set of size $2q-1$. Moreover, if $q$ is an even power of 2 there exists an untouchable set of size $2q-2$.
\end{thm}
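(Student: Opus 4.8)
The plan is to realize the untouchable set as a union of two hyperconics drawn from the given pencil and then invoke the Proposition, which guarantees that such a union is again untouchable. Writing $H_k = C_k \cup \{(1,1,k)\}$ for the hyperconic obtained by adjoining its nucleus to a non-degenerate conic $C_k$, each $H_k$ is a hyperoval and hence an untouchable set, so $H_a \cup H_b$ is untouchable for any admissible pair $a,b$. Since $|H_a| = |H_b| = q+2$, the size of the union equals $2(q+2) - |H_a \cap H_b|$, and the whole problem reduces to controlling the intersection $H_a \cap H_b$.

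First I would pin down $H_a \cap H_b$ for distinct non-degenerate members $C_a, C_b$ of the pencil. Two distinct conics of the pencil meet precisely in the four base points $(1,0,0),(0,1,0),(1,0,1),(0,1,1)$: if $P$ lies on both, subtracting the two defining forms shows $(a-b)xy$ vanishes at $P$, so $P \in \V(xy)$, and hence $P$ lies in the common base locus of the pencil. The only further points $H_a \cap H_b$ can contain are nuclei. Since $(1,1,a)$ is not a base point and lies on the unique pencil member $C_{a^2}$, the nucleus $(1,1,a)$ belongs to $H_b$ exactly when $b = a^2$, and symmetrically $(1,1,b) \in H_a$ exactly when $a = b^2$; moreover the two nuclei are distinct whenever $a \ne b$.

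For the size-$(2q-1)$ set I would take $b = a^2$ with $a \ne 0,1$ and $a^3 \ne 1$. Then $C_a$ and $C_{a^2}$ are distinct non-degenerate conics, the nucleus $(1,1,a)$ lies on $C_b$, but $(1,1,b) = (1,1,a^2)$ lies on $C_{a^4} \ne C_a$ (using $a^3 \ne 1$) and therefore is not in $H_a$. Thus $H_a \cap H_b$ consists of exactly the four base points together with the single point $(1,1,a)$, giving $|H_a \cup H_b| = 2(q+2) - 5 = 2q-1$. The remaining point is to confirm that a suitable $a$ exists: the forbidden values are $0$ together with the cube roots of unity, at most four elements in all, so for $q \ge 8$ such an $a$ is always available.

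For the ``moreover'' statement, note that $q$ being an even power of $2$ forces $3 \mid q-1$, so $GF(q)$ contains a primitive cube root of unity $\omega$. Taking $a = \omega$ and $b = \omega^2 = a^2$, we now have both $b = a^2$ and $a = \omega = \omega^4 = b^2$, so each of the two distinct nuclei lies on the other conic. Hence $H_a \cap H_b$ consists of the four base points and both nuclei, six points in all, yielding $|H_a \cup H_b| = 2(q+2) - 6 = 2q-2$. The main obstacle throughout is not any deep structural difficulty but the bookkeeping needed to verify that these intersections have exactly the claimed sizes: ruling out accidental coincidences among the nuclei and the base points, and correctly determining which nuclei are shared in each case.
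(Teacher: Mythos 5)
Your proposal is correct and follows essentially the same approach as the paper: both form the union of the two hyperconics on $C_a$ and $C_{a^2}$, exploiting the fact that the nucleus $(1,1,a)$ of $C_a$ lies on $C_{a^2}$, with the cube-root-of-unity case yielding the size $2q-2$ set. Your inclusion--exclusion bookkeeping via $|H_a \cap H_b|$ is just a cosmetic repackaging of the paper's direct point count.
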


\begin{proof}
Let $a \in GF(q)$ with $a \ne 0,1$. If $a$ is not a cube root of unity, then the nucleus of $C_a$ lies on $C_{a^2}$, and the nucleus of $C_{a^2}$ lies on $C_{a^4} \ne C_a$. Define $\S = C_{a} \cup C_{a^2} \cup \{(1,1,a^2)\}$. $C_a$ has $q+1$ points, and $C_{a^2}$ provides an additional $(q+1) - 4 = q-3$ points. As $(1,1,a^2)$, the nucleus of $C_{a^2}$, lies on neither $C_a$ nor $C_{a^2}$, $\S$ has $q+1 + q-3 + 1 = 2q-1$ points. On the other hand $\S$ is the union of the hyperconics obtained from augmenting $C_a$ and $C_{a^2}$ by their nuclei. Therefore $\S$ is an untouchable set.

If $a$ is a cube root of unity, which can only occur when $q$ is an even power of 2, $a^4 = a$, so the nucleus of $C_a$ lies on $C_{a^2}$ and the nucleus of $C_{a^2}$ lies on $C_a$, so $\S = C_{a} \cup C_{a^2}$ has size $2q-2$ and is the union of two hyperconics, making it an untouchable set of size $2q-2$.
\end{proof}

To construct untouchable sets of size $2q+1$ in $PG(2,q)$ for $q$ even, we can appeal to a different conic pencil, generated by $xy$ and $z^2+xz$. From Hirschfeld~\cite[Table 7.7]{hirschfeld}, we see this pencil has two line pairs, namely the generators, and $q-1$ conics. For $k \in GF(q)$, $k \ne 0$, the conics are of the form $D_k = \V(kxy + z^2 + xz)$. All of the elements in the pencil contain three common points, namely $(1,0,0),(0,1,0),(1,0,1)$, and as before the nucleus of $D_k = (0,1,k)$. Unlike before, there is no combinatorial requirement that the nucleus cannot lie on one of the line pairs, and in fact this is the case; the nuclei of all conics in the pencil lie on $\V(xy)$. This allows us to prove:

\begin{thm}
In $PG(2,q)$ with $q \ge 8$, there exists an untouchable set of size $2q+1$.
\end{thm}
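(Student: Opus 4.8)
The plan is to follow the template of the preceding theorem, taking $\S$ to be the union of two hyperconics drawn from the new pencil, but now cashing in on the fact that two conics of this pencil share only three points instead of four. Fix two distinct nonzero elements $a,b \in GF(q)$; these exist for every even $q \ge 8$ (indeed whenever $q \ge 4$), and each of $D_a, D_b$ is a genuine conic since the only degenerate members of the pencil are the two generators. Set
\[
\S = \left(D_a \cup \{(0,1,a)\}\right) \cup \left(D_b \cup \{(0,1,b)\}\right),
\]
the union of the hyperconics obtained by adjoining to $D_a$ and $D_b$ their respective nuclei $(0,1,a)$ and $(0,1,b)$.

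First I would pin down the intersection $D_a \cap D_b$ directly. Subtracting the two defining quadratics $kxy + z^2 + xz$ for $k = a,b$ gives $(a-b)xy$, and since $a \ne b$ any common zero satisfies $x = 0$ or $y = 0$. Substituting $x = 0$ into $z^2 + xz$ leaves $z^2$, forcing $z = 0$ and the single point $(0,1,0)$; substituting $y = 0$ leaves $z(z+x)$, yielding $(1,0,0)$ and $(1,0,1)$. Thus $D_a \cap D_b$ is exactly the set of three base points of the pencil, so $|D_a \cup D_b| = 2(q+1) - 3 = 2q-1$.

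Next I would confirm that the two nuclei contribute two genuinely new points. They are distinct because $a \ne b$. Evaluating the quadratic of any conic $D_k$ at $(0,1,a)$ gives $k\cdot 0 + a^2 + 0 = a^2 \ne 0$, so $(0,1,a)$ lies on no member of the pencil, and symmetrically for $(0,1,b)$; in particular neither nucleus lies on $D_a \cup D_b$. The nuclei also differ from the three base points, each of which has either second or third coordinate zero, whereas $(0,1,a)$ and $(0,1,b)$ have second coordinate $1$ and nonzero third coordinate. Hence $|\S| = (2q-1) + 2 = 2q+1$.

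Finally, by construction $\S$ is the union of the two hyperconics attached to $D_a$ and $D_b$, each of which is a hyperoval and therefore an untouchable set, so the Proposition of Section~1 (unions of untouchable sets are untouchable) immediately gives that $\S$ is untouchable, completing the proof. I expect the only delicate step to be the intersection count in the second paragraph: one must be certain that $D_a$ and $D_b$ meet in exactly three points rather than four. The resolution is precisely that $(0,1,0)$ is a \emph{tangential} base point of the pencil (the line $x=0$ meets each conic there with multiplicity two), so the four Bézout intersections collapse to three distinct points; a miscount here would shift $|\S|$ by one and invalidate the construction.
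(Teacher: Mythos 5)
Your proposal is correct and is essentially the paper's own proof: the same construction of the union of the two hyperconics $D_a \cup \{(0,1,a)\}$ and $D_b \cup \{(0,1,b)\}$ drawn from the pencil generated by $xy$ and $z^2+xz$, with the count $2q+1$ obtained from the three-point intersection of the conics plus the two nuclei. The only difference is cosmetic: you verify the three common points and the fact that the nuclei lie on no conic of the pencil by direct computation, whereas the paper cites the pencil structure from Hirschfeld's Table 7.7 and the observation that the nuclei lie on the line pair $\V(xy)$.
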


\begin{proof}
Let $a,b \in GF(q)$ with $a,b \ne 0 $ and $a \ne b$. Let $\S = D_a \cup D_b \cup \{(0,1,a),(0,1,b)\}$. $D_a$ has $q+1$ points, and $D_b$ shares three points with $D_a$, so $D_a \cup D_b$ has $2q-1$ points. The points $(0,1,a)$ and $(0,1,b)$ are the nuclei of $D_a$ and $D_b$, respectively, so neither lies on either $D_a$ or $D_b$ and $\S$ has $2q+1$ points. As $\S$ is the union of the hyperconics $D_a \cup \{(0,1,a)\}$ and $D_b \cup \{(0,1,b)\}$, it is the untouchable set we require.
\end{proof}

As two distinct conics cannot meet in more than 4 points, there is no hope of using conic pencils to fill any other gaps in the spectrum of sizes of untouchable sets below $2q-2$, and we have seen the only case where size $2q-2$ is achievable, where we found two conics intersecting in 4 points which each contained the other's nucleus. However, the union construction works for any pair of hyperovals, not just hyperconics. Indeed Blokhuis, Sz\H{o}nyi and Weiner~\cite{blszwe} construct an untouchable set in $PG(2,16)$ of size $2q-4$ by taking the union of a hyperconic and a Lunelli-Sce hyperoval which intersect each other in 8 points. 

\section{Construction for $q$ odd}

When $q$ is odd, a finite projective plane of order $q$ cannot contain a hyperoval, and in general less seems known about untouchable sets. However, pencils of conics still have a role to play, in this case the pencil generated by $xy$ and $z^2$. This pencil was studied by Dover and Mellinger~\cite{dm} in connection with semiovals, namely point sets that have a unique tangent line at each point. This pencil consists of a set of $q-1$ conics which all contain the same set of two points, the repeated line $z^2$ which contains the two common points, and the line pair $xy$, which is the union of the tangent lines to each of the conics at the common points. We label the conics in the pencil $C_k = \V(xy+kz^2)$ for $k \in GF(q)$, $k \ne 0$.

The following relationship between conics of this pencil is essentially given in Dover and Mellinger~\cite[Lemma 3.2, Theorem 4.1]{dm}, though the language is slightly different.
\begin{prop}
\label{mutextcond}
In $PG(2,q)$ with $q$ odd, let $C_a$ and $C_b$ be distinct non-degenerate conics. Then every point on $C_b$ not lying on $C_a$ is either exterior to $C_a$ or interior to $C_a$ as $a(a-b)$ is a square or non-square in $GF(q)$.
\end{prop}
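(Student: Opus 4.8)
The plan is to reduce the interior/exterior question to counting the points in which a single line meets $C_a$, by invoking the classical polar-line characterization. Recall that in $PG(2,q)$ with $q$ odd, a point $P$ not on a non-degenerate conic is exterior to it precisely when its polar line is a secant (meets the conic in two points) and interior precisely when its polar line is external (meets the conic in no points); these are the cases where two or zero tangent lines pass through $P$. It therefore suffices to compute how the polar line of $P$ with respect to $C_a$ meets $C_a$, and to show that the count is governed by the quadratic character of $a(a-b)$. A pleasant feature to watch for is that this quantity is independent of the chosen point $P$, which is exactly why the conclusion is uniform over all points of $C_b$ not on $C_a$.

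First I would normalize the point. A point $P$ of $C_b$ not lying on $C_a$ cannot be one of the two common points $(1,0,0)$, $(0,1,0)$, since those lie on every conic of the pencil; hence its final coordinate is nonzero and I may take $P = (x_0, y_0, 1)$. The conditions $P \in C_b$ and $P \notin C_a$ become $x_0 y_0 = -b$ and $x_0 y_0 + a = a - b \ne 0$, and since $b \ne 0$ both $x_0$ and $y_0$ are nonzero. Writing $C_a = \V(xy + az^2)$, the polar line of $P$ with respect to $C_a$ is $y_0 x + x_0 y + 2a z = 0$.

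Next I would intersect this line with $C_a$. Eliminating $y$ from the polar equation (legitimate since $x_0 \ne 0$) and substituting into $xy + az^2 = 0$ yields the homogeneous binary quadratic $y_0 x^2 + 2a\, xz - a x_0 z^2 = 0$ in the coordinates $(x:z)$ along the line, with $y$ then determined. Its discriminant is $(2a)^2 - 4 y_0(-a x_0) = 4a^2 + 4a x_0 y_0 = 4a(a-b)$, using $x_0 y_0 = -b$. Thus the polar line meets $C_a$ in two points exactly when $a(a-b)$ is a nonzero square and in none when $a(a-b)$ is a non-square; the tangent case $a(a-b) = 0$ is excluded because $a \ne 0$ and $a \ne b$. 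Combined with the polar-line characterization, this gives the claim: $P$ is exterior or interior to $C_a$ according as $a(a-b)$ is a square or a non-square.

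The computation is short, so the only delicate points are the bookkeeping ones: confirming that the relevant points all have nonzero final coordinate and that $x_0, y_0 \ne 0$, so the elimination and the discriminant are valid, and observing that the constant factor $4$ is a square and so does not affect the quadratic character. The single step I would state by reference rather than reprove is the polar-line characterization of interior and exterior points, which I would cite from the standard theory of conics in $PG(2,q)$ (for example Hirschfeld~\cite{hirschfeld}).
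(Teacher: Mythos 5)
Your proof is correct, but it is worth noting that the paper itself does not prove this proposition at all: it is stated as a translation of results cited from Dover and Mellinger~\cite[Lemma 3.2, Theorem 4.1]{dm}, so there is no internal argument to compare against. What you have supplied is a self-contained verification, and it checks out. The normalization is legitimate: the only points common to all conics of the pencil $\V(xy+kz^2)$ are $(1,0,0)$ and $(0,1,0)$, so a point of $C_b$ off $C_a$ has nonzero last coordinate and can be written $(x_0,y_0,1)$ with $x_0y_0=-b$, forcing $x_0,y_0\ne 0$. The polar line of $P$ with respect to $\V(xy+az^2)$ is indeed $y_0x+x_0y+2az=0$ (polarizing the form $xy+az^2$), the elimination gives $y_0x^2+2axz-ax_0z^2=0$, and the discriminant $4a^2+4ax_0y_0=4a(a-b)$ is never zero since $a\ne 0$ (non-degeneracy) and $a\ne b$ (distinctness), so the polar line is a secant or an external line according to the quadratic character of $a(a-b)$, and the classical polarity characterization (exterior $\Leftrightarrow$ polar is secant, interior $\Leftrightarrow$ polar is external, valid for $q$ odd; see Hirschfeld~\cite{hirschfeld}) finishes the argument. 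Your approach trades the paper's reliance on an external reference for a short direct computation, which makes the present paper more self-contained; the one thing the citation route buys is that Dover and Mellinger develop this pencil's interior/exterior structure more broadly, which is why the paper leans on it rather than reproving the special case. Your observation that the discriminant is independent of the particular point $P$ is exactly the content of the uniformity claim (``every point on $C_b$ not lying on $C_a$'' falls on the same side), and your proof makes that mechanism visible where the paper's citation hides it.
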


In order to form an untouchable set from a conic with $q$ odd, we need to add exterior points to additionally cover tangents. Thus it makes sense to look at conics that are {\em mutually exterior}, i.e., the points on exactly one of the conics are exterior to the other conic. The following Proposition shows that such a pair of conics exists when $q \ge 5$.

\begin{prop}
In $PG(2,q)$ with $q \ge 7$ odd, there exists a pair of mutually exterior conics $C_a$ and $C_b$.
\end{prop}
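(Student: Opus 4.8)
The plan is to use Proposition~\ref{mutextcond} to translate the geometric condition ``mutually exterior'' into a purely arithmetic statement about squares in $GF(q)$, and then to settle that statement with a standard quadratic character sum. First I would unwind the definitions. By Proposition~\ref{mutextcond}, the points of $C_b$ not lying on $C_a$ are all exterior to $C_a$ exactly when $a(a-b)$ is a square; applying the same proposition with the roles of $a$ and $b$ reversed, the points of $C_a$ not lying on $C_b$ are all exterior to $C_b$ exactly when $b(b-a)$ is a square. Since $a, b \ne 0$ and $a \ne b$, both quantities are automatically nonzero, so $C_a$ and $C_b$ are mutually exterior precisely when $a(a-b)$ and $b(b-a)$ are both squares.

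Next I would cut down the number of parameters by scaling. Writing $\lambda = b/a$ (legitimate as $a \ne 0$), we have $a(a-b) = a^2(1-\lambda)$ and $b(b-a) = a^2\lambda(\lambda-1)$, so squareness is insensitive to the common factor $a^2$ and depends only on $\lambda$. The two requirements become that $1-\lambda$ is a nonzero square and that $\lambda(\lambda-1)$ is a nonzero square; given the first, the second is equivalent to $-\lambda$ being a square, since $\lambda(\lambda-1) = -\lambda(1-\lambda)$. Setting $\mu = -\lambda$, the problem collapses to the classical question of whether there exists $\mu \in GF(q)$ with both $\mu$ and $\mu+1$ nonzero squares; any such $\mu$ produces an admissible pair, for instance $a=1$, $b=-\mu$ (equivalently $C_1$ and $C_{-\mu}$).

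Finally I would count. Letting $\chi$ denote the quadratic character of $GF(q)$, the number $N$ of $\mu \ne 0, -1$ for which $\mu$ and $\mu+1$ are both nonzero squares satisfies
$$4N + 2 + \bigl(1 + \chi(-1)\bigr) = \sum_{\mu \in GF(q)} \bigl(1+\chi(\mu)\bigr)\bigl(1+\chi(\mu+1)\bigr),$$
where the extra terms on the left isolate the contributions of $\mu = 0$ and $\mu = -1$. The right-hand side expands to $q + \sum_\mu \chi(\mu) + \sum_\mu \chi(\mu+1) + \sum_\mu \chi(\mu^2+\mu)$; the first two sums vanish, and the last equals $-1$ because $\mu^2+\mu$ has distinct roots. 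Hence $N = (q-4-\chi(-1))/4 \ge (q-5)/4 > 0$ for $q \ge 7$, and as $N$ is an integer this forces $N \ge 1$.

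The one delicate point, and the main obstacle, is the bookkeeping at the boundary of the character-sum identity: the values $\mu = 0$ and $\mu = -1$ must be separated out since they do not contribute genuine pairs, and it is precisely the residual dependence on $\chi(-1)$ that pins the threshold at $q = 7$ rather than $q = 5$ (indeed $N = 0$ when $q = 5$, so no such pair exists there). Everything else is routine once the standard evaluation $\sum_\mu \chi(\mu^2+\mu) = -1$ for $q$ odd is invoked.
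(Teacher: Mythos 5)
Your proof is correct, and it diverges from the paper's in an instructive way at the counting stage. The first half is the same reduction the paper makes: invoke Proposition~\ref{mutextcond} twice to turn ``mutually exterior'' into the condition that $a(a-b)$ and $b(b-a)$ are both nonzero squares (the paper simply sets $a=1$ outright, where you justify the normalization by the scaling $\lambda = b/a$, which is a slightly cleaner way to see that only the ratio matters). Where the paper then splits into cases $q \equiv 1$ and $q \equiv 3 \pmod 4$ and cites Dickson's classical counts of consecutive squares (respectively, consecutive non-squares), you substitute $\mu = -\lambda$ to reach the single condition ``$\mu$ and $\mu+1$ both nonzero squares'' and evaluate the count directly with the quadratic character, using $\sum_\mu \chi(\mu^2+\mu) = -1$. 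Your unified formula $N = \bigl(q-4-\chi(-1)\bigr)/4$ reproduces exactly the two Dickson counts the paper quotes, $(q-5)/4$ and $(q-3)/4$, so the two arguments agree numerically; the trade-off is that your version is self-contained and avoids the congruence case split (at the cost of the boundary bookkeeping at $\mu = 0, -1$, which you handle correctly), while the paper's version is shorter on the page by outsourcing the count to a reference. Your observation that $N=0$ at $q=5$ also explains why the threshold in the statement is $q \ge 7$, and your integrality remark correctly closes the $q=7$ case where $(q-5)/4$ alone is not $\ge 1$.
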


\begin{proof}
Without loss of generality, we assume $a = 1$. Using Proposition~\ref{mutextcond}, we find that $C_1$ and $C_b$ are mutually exterior if and only if $1-b$ and $b(b-1)$ are non-zero squares in $GF(q)$. If $q \equiv 1 \pmod{4}$, then $-1$ is a square, and these conditions are equivalent to both $b$ and $b-1$ being squares, necessarily non-zero as $b \ne 1$. From Dickson~\cite{dickson}, we know that the number of non-zero squares $b-1$ such that $(b-1)+1 = b$ is also a non-zero square is $\frac14(q-5)$, so a pair of mutually exterior conics exists for $q \equiv 1 \pmod{4}$ as long as $q \ge 9$.

 If $q \equiv 3 \pmod{4}$, then $-1$ is a non-square, and our conditions are equivalent to both $b$ and $b-1$ being non-squares. Again from Dickson~\cite{dickson}, the number of squares $b-1$ such that $b$ is a non-square is $\frac14(q+1)$, so the number of non-squares $b-1$ such that $b$ is a non-square is $\frac14(q-3)$, which is greater than zero for $q \ge 7$.
\end{proof}

We are now ready to exhibit an untouchable set of size $2q+1$, when $q \equiv 3 \pmod{4}$.

\begin{thm}
\label{3mod4}
In $PG(2,q)$ for $q \equiv 3 \pmod{4}$ odd with $q \ge 7$, let $b$ be a non-square such that $b-1$ is also a non-square, so that $C_1$ and $C_b$ are a pair of mutually exterior conics. Then $\S = C_1 \cup C_b \cup \{(0,0,1)\}$ is an untouchable set of size $2q+1$.
\end{thm}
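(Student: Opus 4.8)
The plan is to verify the size first and then show directly that no line meets $\S$ in exactly one point. The size is the routine part: subtracting the defining equations $xy+z^2$ and $xy+bz^2$ forces $(1-b)z^2=0$, hence $z=0$, so $C_1\cap C_b=\{(1,0,0),(0,1,0)\}$ and $|C_1\cup C_b|=2(q+1)-2=2q$. Since $(0,0,1)$ lies on no $C_k$ (there $xy+kz^2=k\ne 0$), we get $|\S|=2q+1$.

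For untouchability I would split lines according to whether they pass through $N=(0,0,1)$. A line through $N$ has the form $ux+vy=0$. The lines $x=0$ and $y=0$ are the common tangents of $C_1$ and $C_b$ at the two base points, and each contains a base point together with $N$, so it meets $\S$ in two points. For $u,v\ne 0$, substituting $x=-(v/u)y$ into $xy+kz^2$ yields $z^2=(v/(uk))y^2$, so the line meets $C_k$ (for $k\in\{1,b\}$) in two points precisely when $v/(uk)$ is a nonzero square. Because $b$ is a non-square, $v/(ub)=(v/u)b^{-1}$ has the opposite quadratic character to $v/u$, so exactly one of $v/u,\,v/(ub)$ is a square; thus every such line is a secant to exactly one of $C_1,C_b$ and meets $\S$ in at least three points (two conic points together with $N$).

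The heart of the argument is the lines avoiding $N$, where $\ell\cap\S=(\ell\cap C_1)\cup(\ell\cap C_b)$; such a line can meet $\S$ in a single point only if it is tangent to one conic and external to the other (a tangent at a base point is one of $x=0,y=0$, which passes through $N$, so the tangency point is a non-base point, lying on only one of the conics). I would rule this out by direct computation. A tangent to $C_1$ at a non-base point, scaled so $z_0=1$ and with $t=x_0\ne 0$, is $x=t^2y+2tz$; substituting into $xy+bz^2$ gives the binary form $t^2y^2+2tyz+bz^2$ with discriminant $4t^2(1-b)$. Since $-1$, $b$, and $b-1$ are all non-squares, $1-b=-(b-1)$ is a nonzero square, so the discriminant is a nonzero square and the tangent is a secant of $C_b$, meeting it in two points distinct from the point of tangency. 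The symmetric computation for a tangent to $C_b$ produces the discriminant $4t^2(b-1)/b$, again a nonzero square as $(b-1)/b$ is a product of two non-squares, so that tangent is a secant of $C_1$.

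I expect the main obstacle to be confirming that a tangent line to one conic is genuinely a \emph{secant} to the other, rather than merely observing (via Proposition~\ref{mutextcond}) that its point of tangency is exterior; exteriority of that point alone is insufficient, since through an exterior point pass both secants and external lines in equal numbers. This is exactly why the explicit discriminant computations, together with the square/non-square bookkeeping driven by the hypotheses that $b$ and $b-1$ are non-squares, cannot be bypassed. Once both discriminants are shown to be nonzero squares, the case analysis closes: neither a line through $N$ nor a line avoiding $N$ can be tangent to $\S$, so $\S$ is untouchable.
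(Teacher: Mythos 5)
Your proof is correct, but the key step is handled by a genuinely different argument than the paper's. Where you rule out a line tangent to one conic and external to the other by explicit discriminant computations (tangent to $C_1$ at $(t,-1/t,1)$ meets $C_b$ with discriminant $4t^2(1-b)$, a nonzero square since $-1$ and $b-1$ are both non-squares, and symmetrically $4t^2(b-1)/b$ for the other direction), the paper instead uses a double-counting argument: it counts flags $(P,\ell)$ with $P\in C_b\setminus C_1$ and $\ell$ a tangent to $C_1$ at a point of $C_1\setminus C_b$. Since each such $P$ is exterior to $C_1$ (this is where Proposition~\ref{mutextcond} and the non-square hypotheses enter), it lies on exactly two tangents, none touching at a base point, giving exactly $2(q-1)$ flags; but the $q-1$ tangents in question each meet $C_b$ in at most $2$ points, so equality forces \emph{every} such tangent to be a secant of $C_b$. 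This shows your closing claim that the discriminant computations ``cannot be bypassed'' is too strong: the paper does bypass them, converting the mere exteriority of the points of $C_b\setminus C_1$ into the secant property by counting, precisely sidestepping the objection you raise about exterior points lying on external lines as well as secants. The trade-off is that the paper's route leans on the mutually exterior machinery (Proposition~\ref{mutextcond} and standard facts about exterior points of conics in odd characteristic), whereas your computation is self-contained, makes no use of the exterior/interior dichotomy at all, and shows exactly where each hypothesis ($-1$, $b$, $b-1$ non-square) is consumed; your treatment of the lines through $N=(0,0,1)$ also sharpens the paper's (every such line other than $x=0$, $y=0$ is a secant to exactly one of the two conics, hence meets $\S$ in at least three points, versus the paper's exhibition of a single conic point on the line). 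Both proofs are complete and correct.
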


\begin{proof}
First note that $\S$ has size $2q+1$: $C_1$ has $q+1$ points; $C_b$ adds $q-1$ additional points, since $C_1$ and $C_b$ intersect in 2 points; and $(0,0,1)$ does not lie on either conic.

Count the number of flags $(P,\ell)$, where $P$ is a point of $C_b$ not also in $C_1$, and $\ell$ is a tangent line to $C_1$ at a point not also in $C_b$. Each of the $q-1$ such points $P$ is an exterior point to $C_1$, and thus lies on two tangents to $C_1$. Moreover, these tangents cannot pass through the points of $C_1 \cap C_b$, since those lines are also tangent to $C_b$. Thus there are $2(q-1)$ such flags. On the other hand, there are $q-1$ tangents to $C_1$ at points not also in $C_b$, and each meets $C_b$ in at most 2 points, so the number of flags under consideration is at most $2(q-1)$, with equality only if every tangent to $C_1$ at a point not also in $C_b$ is a secant to $C_b$. The same argument works with the roles of $C_1$ and $C_b$ exchanged, so any line that meets $\S$ in a point of $C_1 \triangle C_b$ necessarily meets $\S$ in at least 2 points.

A line $\ell$ containing either of the points of $C_1 \cup C_b$ is either a secant to both $C_1$ and $C_b$, or is tangent to both. But the tangents at these points are concurrent at $(0,0,1)$, so the line $\ell$ meets $\S$ in at least 2 points.

Finally let $\ell$ be a line through $(0,0,1)$. The lines with coordinates $[1,0,0]$ and $[0,1,0]$ are the tangents to $C_1$ and $C_b$ at their common points, and thus contain 2 points of $\S$. So we can restrict our attention to $\ell = [1,c,0]$ for some $c \in GF(q)$, $c \ne 0$. If $c$ is a square, then there exists $\gamma \in GF(q)$ such that $\gamma^2 = c$, and $(c,-1,\gamma)$ is a point on $C_1$. If $c$ is a non-square, there exists $\delta \in GF(q)$ such that $\delta^2 = \frac{c}{b}$, as both $c$ and $b$ are non-squares; in this case, $(c,-1,\delta)$ is a point on $C_b$. Thus $\ell$ meets one of $C_1$ or $C_b$ in at least one point, and $\ell$ is not a tangent to $\S$. This implies $\S$ is an untouchable set.
\end{proof}

The proof of Theorem~\ref{3mod4} almost works for the $q \equiv 1 \pmod{4}$ case as well, except that it fails in the last step; there will be tangent lines through $(0,0,1)$. Moreover, this technique does not seem reparable, as the point $(0,0,1)$ is needed to ensure the lines of the line pair are not tangents to $\S$.

\section{Conclusion}
The constructions of this paper provide corroborating evidence for the conjectures made by H\'eger and Nagy~\cite{hn} regarding the existence of untouchable sets of size $2q \pm 1$; generally we have exhibited an infinite family of examples where they had computational examples in small planes, and not found counterexamples to their suspected non-existence results. The main place we have failed is in finding untouchable sets of size $2q+1$ in $PG(2,q)$ for $q \equiv 1 \pmod{4}$; we suspect these sets exist, but do not believe that conics are involved. At the very least, we conjecture that when $q \equiv 1 \pmod{4}$, an untouchable set of size $2q+1$ in $PG(2,q)$ may not contain a conic. The non-existence of untouchable sets of size $2q-1$ in $PG(2,q)$ for all odd $q$ remains open, but we suspect that is a significantly harder problem, given how far $2q+1$ is from the general lower bound of $q + \frac14 \sqrt{2q} + 2$.

\bibliographystyle{plain}

\begin{thebibliography}{1}
\bibitem{blsewi}
A.~Blokhuis, \'A.~Seress, and H.A.~Wilbrink.
\newblock On Sets of Points in PG(2,q) Without Tangents.
\newblock {\em Mitteilungen Aus Dem Mathematischen Seminar Giessen} 201:39--44, 1991.

\bibitem{blszwe}
Aart~Blokhuis, Tam\'as~Sz\H{o}nyi, and Zsuzsa Weiner.
\newblock On Sets without Tangents in Galois Planes of Even Order.
\newblock {\em Des Codes Cryptogr.} 29:91--98, 2003.

\bibitem{dickson}
Leonard~E.~Dickson.
\newblock {\em Linear groups with an exposition of the Galois field theory}.
\newblock B.G. Teubner. Leipzig, 1901.

\bibitem{dm}
Jeremy~M.~Dover and Keith~E.~Mellinger.
\newblock Semiovals from unions of conics.
\newblock {\em Innov. Incidence Geom.} 12:61--83, 2011.

\bibitem{hn}
Tam\'as~H\'eger and Zolt\'an~L\'or\'ant~Nagy.
\newblock Avoiding Secants of Given Size in Finite Projective Planes.
\newblock {\em J. Combin. Des.} 33:83--93, 2024.

\bibitem{hirschfeld}
J.~W.~P. Hirschfeld.
\newblock {\em Projective geometries over finite fields}.
\newblock Oxford Mathematical Monographs. The Clarendon Press Oxford University
  Press, New York, second edition, 1998.

\end{thebibliography}

\end{document}